\documentclass[12pt]{article}
\usepackage{amsmath, amsthm}
\usepackage{amssymb}
\textwidth 12cm     
\textheight 20cm


\newtheoremstyle{theorem}
  {10pt}		  
  {10pt}  
  {\sl}  
  {\parindent}     
  {\bf}  
  {. }    
  { }    
  {}     
\theoremstyle{theorem}
\newtheorem{theorem}{Theorem}

\newtheorem{pro}[theorem]{Proposition}

\newtheoremstyle{defi}
  {10pt}		  
  {10pt}  
  {\rm}  
  {\parindent}     
  {\bf}  
  {. }    
  { }    
  {}     
\theoremstyle{defi}
\newtheorem{definition}[theorem]{Definition}
\newtheorem{example}[theorem]{Example}
\newtheorem{remark}[theorem]{Remark}



\begin{document}

\title{Derivations and Centroids of Four Dimensional Associative Algebras}

\author{A.O. Abdulkareem$^1$, M.A. Fiidow$^2$ and I.S. Rakhimov$^{2,3}$\\
$^1$Department of Mathematics,
          College of Physical Sciences, \\ 
          Federal University of Agriculture Abeokuta\\
          PMB 2240, Alabata road, Abeokuta,\\
           Ogun State, Nigeria.\\
afeezokareem@gmail.com\\[2pt]
$^2$Department of Mathematics, Faculty of Science,\\ Universiti Putra Malaysia, UPM 43400 Serdang,\\ Selangor Darul Ehsan, Malaysia.\\
wilow8@gmail.com\\[2pt]
$^3$Institute for Mathematical Research (INSPEM),\\ Universiti Putra Malaysia, UPM 43400
 Serdang,\\ Selangor Darul Ehsan, Malaysia.\\
risamiddin@gmail.com}

\maketitle

\begin{abstract}
In this paper, we focus on derivations and centroids of four dimensional associative algebras. Using an existing classification result of low dimensional associative algebras, we describe the derivations and centroids of four dimensional associative algebras. We also identify algebra(s) that belong to the characteristically nilpotent class among the algebra of four dimensional associative algebras. 

{\bf AMS Subject Classification:} 16D70

{\bf Key Words and Phrases:}Derivation, Centroid, Associative algebras, Characteristically nilpotent.
\end{abstract}

\section{Introduction}
The interest in the study of derivations of algebras goes back to a paper by Jacobson \cite{J}. There, Jacobson proved that any Lie algebra over a field of characteristic zero which has non degenerate derivations is nilpotent. In the same paper, he asked for the converse. Dixmier and Lister \cite{DL} have given a negative answer to the converse of Jacobson's hypothesis by constructing an example of a nilpotent Lie algebra all of whose derivations are nilpotent (hence degenerate). Lie algebras whose derivations are nilpotent have been called characteristically nilpotent. The result of Dixmier and Lister in \cite{DL} is assumed to be the origin of the theory of characteristically nilpotent Lie algebras. A few years later in 1959, Leger and Togo \cite{RH3} published a paper showing the importance of the characteristically nilpotent Lie algebras. The results of Leger and Togo have been extended for the class of non associative algebras.

The theory of characteristically nilpotent Lie algebras constitutes an independent research object since 1955. Until then, most studies about Lie algebras were oriented to the classical aspects of the theory, such as semi-simple and reductive Lie algebras (see \cite{T}).

The structural theory of finite dimensional associative algebras have been treated  by \cite{peirc}. Many interesting results related to the problem have appeared since then. Further works  in this field can be found in \cite{hazlett}, \cite{mazzola1979}, \cite{mazzola1980}, \cite{poonen2008} and \cite{RRB}. Further development of the theory of associative algebras was in $80$-s of the last century when many open problems, remaining on unsolved since $30$-s, have been solved

Most classification problems of finite dimensional associative algebras have been studied for certain property(s) of associative algebras while the complete classification of associative algebras in general
is still an open problem.

Centroids of algebras play important role in the classification problems and in different areas of structure theory of algebras. The centroid of a Lie algebra is known to be a field and this fact plays an important role in the classification problem of finite dimensional extended affine Lie algebras over arbitrary field of characteristic zero (see \cite{BN}). Benkarta and Neher studied extended affine and root graded Lie algebras in \cite{BN}. Melville in \cite{Mel} studied the centroids of nilpotent Lie algebra. 
Centroids and derivations of associative algebras in dimension less than $4$ has been treated in \cite{FRS} and provided an impetus to further the study in higher dimension.

In this study, we concentrate on the derivations and centroids of associative algebras in dimension four. Using the classification result of associative algebras, we give a description of derivations and centroids of associative algebras in dimension $4$. 

%
%
%

\section{Preliminaries}
\label{}
\begin{definition}
An algebra $A$ over a field $\mathbb{K}$ is a vector space over $\mathbb{K}$ equipped with a bilinear map
\begin{equation*}
\lambda : A \times A \rightarrow A.
\end{equation*}
\end{definition}
\begin{definition}
An associative algebra $A$ is a vector space over a field $\mathbb{K}$ equipped with bilinear map $\Psi:A\times A :\longrightarrow A$ satisfying the associative law:
\begin{equation*}
\Psi(\Psi(x,y),z)= \Psi(x,\Psi(y,z)) \ for \ all \ x,y,z \in A.
\end{equation*}
\end{definition}
\begin{definition}
A Lie algebra $ L $ over a field $\mathbb{K}$ is an algebra satisfying the following conditions:
\begin{align*}
&[x,x]= 0,  \\
[[x,y],z]+ &[[y,z],x]+ [[z,x],y]= 0, \, \forall \ x,\ y, \ z \ \in L.
\end{align*}
\end{definition}
\begin{definition}
Let $(A_1,\cdot)$ and $(A_2,\circ)$ be two associative algebras over a field $\mathbb{K}$. A homomorphism between $A_1$ and $A_2$ is a $\mathbb{K}$-linear mapping $f:A_1\longrightarrow A_2$ such that $$f(x\cdot y)=f(x)\circ f(y) \ for \ all \ x,\ y \ \in A_1.$$
\end{definition}
The set of all homomorphism from $A_1$ to $A_2$ is denoted by $Hom_{\mathbb{K}}(A_1,A_2)$. If $A_1=A_2=A$, then it is an associative algebra with respect to composition operation and denoted by $Hom_{\mathbb{K}}(A)$. The linear mapping associated with $Hom_{\mathbb{K}}(A)$ is called an endomorphism. A bijective homomorphism is called isomorphism and the corresponding algebras are said to be isomorphic.
\begin{example}\label{AS}
Let $V$ be an $n$-dimensional vector space over a field $\mathbb{K}$. The set of all endomorphisms, $End(V)$, forms a vector space. The multiplication of two elements $f,g \in End(V)$ is defined by
\begin{equation*}
(f\circ g)(v)=f(g(v)), \ for \ all \ v \in V.
\end{equation*}
\end{example}
This product turns $End(V)$ into an associative algebra.
\begin{example}\label{EX1} Let the product of two elements in Example \ref{AS} above be defined by $[f,g]=f\circ g-g\circ f$. Then, $(End(V),[\cdot,\cdot])$ is a Lie algebra.
\end{example}

\begin{definition}
A linear transformation $d$ of an associative algebra $A$ is called a derivation if for any $x, y \in L$
$$ d(xy)=d(x)\cdot y+x\cdot d(y).$$
\end{definition}
The set of all derivations of an associative algebra A is denoted by $Der(A)$.
The following fact can easily be proven.
\begin{pro}
Let $A$ be an algebra. Then $Der(A)$ is a Lie algebra with respect to the bracket \ $[f,g]=f\circ g-g\circ f.$
\end{pro}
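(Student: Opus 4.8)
The plan is to realize $Der(A)$ as a Lie subalgebra of the associative-commutator Lie algebra $(End(A),[\cdot,\cdot])$ from Example \ref{EX1}, so that the axioms $[f,f]=0$ and the Jacobi identity come for free and only closure under the three operations (addition, scalar multiplication, bracket) needs to be checked. First I would note that $Der(A)$ is a linear subspace of $End(A)$: if $d_1,d_2$ are derivations and $\alpha\in\mathbb{K}$, then for all $x,y\in A$ one has $(d_1+\alpha d_2)(xy)=d_1(xy)+\alpha d_2(xy)=(d_1(x)y+xd_1(y))+\alpha(d_2(x)y+xd_2(y))=((d_1+\alpha d_2)(x))y+x((d_1+\alpha d_2)(y))$ by bilinearity of the product, so $d_1+\alpha d_2\in Der(A)$.

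The substantive step is closure under the commutator bracket: given $d_1,d_2\in Der(A)$, I must show $[d_1,d_2]=d_1\circ d_2-d_2\circ d_1\in Der(A)$. The key computation is to expand $d_1(d_2(xy))$ twice using the derivation rule:
\begin{equation*}
d_1(d_2(xy))=d_1\bigl(d_2(x)\,y+x\,d_2(y)\bigr)=d_1d_2(x)\,y+d_2(x)\,d_1(y)+d_1(x)\,d_2(y)+x\,d_1d_2(y),
\end{equation*}
and symmetrically for $d_2(d_1(xy))$ with the roles of $d_1$ and $d_2$ interchanged. Subtracting the two expansions, the mixed terms $d_2(x)\,d_1(y)$ and $d_1(x)\,d_2(y)$ occur in both and cancel, leaving exactly
\begin{equation*}
[d_1,d_2](xy)=[d_1,d_2](x)\,y+x\,[d_1,d_2](y),
\end{equation*}
so $[d_1,d_2]$ is again a derivation.

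Finally, since $Der(A)$ is a subspace of $End(A)$ closed under the bracket inherited from $End(A)$, and since $(End(A),[\cdot,\cdot])$ satisfies anti-symmetry and the Jacobi identity by Example \ref{EX1}, these identities restrict to $Der(A)$, which is therefore a Lie algebra. There is no genuine obstacle here; the only point requiring a moment's attention is that a single composite $d_1\circ d_2$ is \emph{not} in general a derivation — the obstruction being precisely the mixed terms above — so the antisymmetrization in the bracket is essential, and it is exactly that antisymmetrization that kills the obstruction.
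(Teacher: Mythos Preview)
Your proof is correct and is the standard argument: realize $Der(A)$ as a subspace of $End(A)$, verify closure under the commutator by the explicit cancellation of the mixed terms, and inherit the Lie axioms from Example~\ref{EX1}. The paper itself does not supply a proof of this proposition at all; it merely announces that ``the following fact can easily be proven'' and moves on, so there is nothing to compare against beyond noting that your write-up fills in exactly the routine verification the authors chose to omit.
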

For  $a \in A,$ let $L_{a}$ and $R_{a}$ be the elements of $Hom(A)$ defined by:
$$L_a(x)=a\cdot x,\quad R_a(x)=x\cdot a,\quad x\in A.$$
\begin{theorem}\label{p2}
Let $ D\in Hom_\mathbb{K}(A) $. Then the following conditions are equivalent :\\
$1$. $d\in Der(A),$\\
$2$. $[d,L_x]=L_{d(x)}, \quad x\in A ,$\\
$3$. $[d,R_y]=R_{d(y)},\quad  y\in A\,.$\\
\end{theorem}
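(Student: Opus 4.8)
The plan is to establish the chain of equivalences by direct computation, proving $1 \Leftrightarrow 2$ and $1 \Leftrightarrow 3$ separately; since the two sides of the argument are symmetric, it suffices to spell out the first in detail and then indicate the second.

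First I would fix $d \in Hom_{\mathbb{K}}(A)$ together with $x \in A$, and evaluate the endomorphism $[d, L_x] = d \circ L_x - L_x \circ d$ on an arbitrary $y \in A$. Unwinding the definition of $L_x$ gives
$$[d, L_x](y) = d(x \cdot y) - x \cdot d(y),$$
while on the other hand $L_{d(x)}(y) = d(x) \cdot y$. Hence the operator identity $[d, L_x] = L_{d(x)}$ holds for all $x \in A$ if and only if $d(x \cdot y) = d(x) \cdot y + x \cdot d(y)$ for all $x, y \in A$, which is precisely the assertion $d \in Der(A)$. This yields $1 \Leftrightarrow 2$; neither direction is lost, because each step is an equivalence rather than an implication — the defining relation of a derivation is literally read off from the evaluated bracket.

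For $1 \Leftrightarrow 3$ I would run the same computation with $R_y$ in place of $L_x$: evaluating $[d, R_y] = d \circ R_y - R_y \circ d$ on $x \in A$ gives
$$[d, R_y](x) = d(x \cdot y) - d(x) \cdot y,$$
whereas $R_{d(y)}(x) = x \cdot d(y)$, so $[d, R_y] = R_{d(y)}$ for all $y \in A$ is once more equivalent to the Leibniz identity $d(x \cdot y) = d(x) \cdot y + x \cdot d(y)$. Combining the two, conditions $1$, $2$, $3$ are pairwise equivalent.

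There is no genuine obstacle here; the proof is a short unwinding of definitions. The only point requiring care is bookkeeping of the quantifiers — the operator $L_x$ is indexed by a single element $x$, but the derivation identity must hold for \emph{all} pairs $(x,y)$, so one must check that the universal quantifier over $x$ (from "$[d,L_x]=L_{d(x)}$ for all $x$") together with evaluation on an arbitrary $y$ recovers the full Leibniz rule, and conversely.
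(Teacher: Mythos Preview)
Your proof is correct and follows exactly the approach the paper indicates: the paper's own proof simply states that the equivalence of $1$--$3$ is ``straightforward using the definition of derivation,'' and your unwinding of the commutators $[d,L_x]$ and $[d,R_y]$ on an arbitrary element is precisely that straightforward verification spelled out in full.
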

\begin{proof}
The proof of equivalence of $1$-$3$ is straightforward using the definition of derivation.
\end{proof}

\begin{definition}
Let $A$ be an arbitrary associative algebra
over a field $\mathbb{K}$: The centroid of $A$, $\Gamma(A)$ is defined by
$$\Gamma(A)=\{\phi\in End(A):\phi(xy)=\phi(x)y=x\phi(y)\}$$
\end{definition}
\begin{definition}
Let $H$ be a non empty subset of $A$. The set
$$Z_{A}(H)=\{x\in A:x\cdot H=H\cdot x=0\}.$$ is said to be centralizer of $H$ in $A$.  
\end{definition}
\noindent It must however be noted that $Z_{A}(A)=Z(A)$, the center of $A$. 
\begin{definition}
Let $\phi\in End(A)$. If $\phi(A)\subseteq Z(A)$ and
$\phi(A^{2})=0$, then $\phi$ is called a central derivation.
\end{definition}
\noindent The set of all central derivations is denoted by $C(A)$.

In the foregoing, we give a few earlier results on some properties of centroids of associative algebras which show the relationship between derivation and centroid. The proof of some of the facts given below can be found in \cite{FRS}. 
\begin{pro}[\cite{FRS}]
Let $A$ be an associative algebra over a field $\mathbb{K}$. If $\phi\in \Gamma(A)$ and $d\in Der(A)$ then $\phi\circ d$ is a derivation of $A$.
\end{pro}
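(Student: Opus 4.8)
The plan is to verify directly that $\phi\circ d$ satisfies the Leibniz identity $\theta(xy)=\theta(x)y+x\theta(y)$ with $\theta=\phi\circ d$. Everything needed is already available: the definition of a derivation for $d$, the defining relations of the centroid for $\phi$, and the $\mathbb{K}$-linearity of $\phi$ (as an element of $End(A)$).

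First I would fix arbitrary $x,y\in A$ and compute $(\phi\circ d)(xy)=\phi\big(d(xy)\big)$. Applying the derivation property of $d$ gives $d(xy)=d(x)y+x\,d(y)$, and then linearity of $\phi$ yields $\phi\big(d(xy)\big)=\phi\big(d(x)\,y\big)+\phi\big(x\,d(y)\big)$. The next step is to invoke the centroid conditions $\phi(uv)=\phi(u)v=u\,\phi(v)$: applied to the pair $(d(x),y)$ it gives $\phi\big(d(x)\,y\big)=\phi\big(d(x)\big)\,y$, and applied to the pair $(x,d(y))$ it gives $\phi\big(x\,d(y)\big)=x\,\phi\big(d(y)\big)$. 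Combining these, $(\phi\circ d)(xy)=\phi\big(d(x)\big)y+x\,\phi\big(d(y)\big)=(\phi\circ d)(x)\,y+x\,(\phi\circ d)(y)$, which is exactly the derivation identity for $\phi\circ d$. Since $\phi\circ d$ is visibly $\mathbb{K}$-linear (a composition of linear maps), it lies in $Der(A)$.

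There is essentially no obstacle here; the only point requiring care is bookkeeping — choosing, for each of the two terms produced by the Leibniz rule, the correct one of the two equalities packaged in the definition of $\Gamma(A)$ (the left-multiplication form $\phi(uv)=\phi(u)v$ for the first term, the right-multiplication form $\phi(uv)=u\phi(v)$ for the second). I would also remark that associativity of $A$ is not needed beyond what is implicit in the definitions, and that the same computation shows more generally that $\Gamma(A)\cdot Der(A)\subseteq Der(A)$.
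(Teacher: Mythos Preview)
Your proof is correct: the direct verification via the Leibniz rule for $d$ followed by the two centroid identities $\phi(uv)=\phi(u)v$ and $\phi(uv)=u\phi(v)$ is exactly what is needed, and you apply them in the right places. Note that the paper does not actually supply a proof of this proposition; it merely cites \cite{FRS}, so there is no in-paper argument to compare against, but your computation is the standard one and is undoubtedly what the cited reference carries out.
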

\begin{pro}[\cite{FRS}]
Let $A$ be an associative algebra over a field $\mathbb{K}$. Then $$C(A)=\Gamma(A) \cap Der(A).$$
\end{pro}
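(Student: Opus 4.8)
The plan is to prove the set equality by establishing the two inclusions $C(A)\subseteq\Gamma(A)\cap Der(A)$ and $\Gamma(A)\cap Der(A)\subseteq C(A)$ separately, working directly from the defining identities. Throughout I will use the observation that an element $z\in Z(A)$ annihilates $A$ on both sides, together with the fact that every product $xy$ lies in $A^{2}$.

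For the forward inclusion, suppose $\phi\in C(A)$, so that $\phi(A)\subseteq Z(A)$ and $\phi(A^{2})=0$. First I would check $\phi\in Der(A)$: for $x,y\in A$ the product $xy$ lies in $A^{2}$, hence $\phi(xy)=0$; on the other hand $\phi(x)$ and $\phi(y)$ lie in $Z(A)$, so $\phi(x)y=0$ and $x\phi(y)=0$, and therefore $\phi(x)y+x\phi(y)=0=\phi(xy)$, which is precisely the derivation identity. The same three vanishing relations give $\phi(xy)=\phi(x)y=x\phi(y)$, so $\phi\in\Gamma(A)$ as well, and thus $\phi\in\Gamma(A)\cap Der(A)$.

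For the reverse inclusion, take $\phi\in\Gamma(A)\cap Der(A)$. The centroid condition gives $\phi(xy)=\phi(x)y$, while the derivation condition gives $\phi(xy)=\phi(x)y+x\phi(y)$; subtracting yields $x\phi(y)=0$ for all $x,y\in A$. Symmetrically, comparing $\phi(xy)=x\phi(y)$ with the derivation identity gives $\phi(x)y=0$ for all $x,y\in A$. Hence $\phi(y)$ is annihilated by $A$ on both sides, i.e. $\phi(A)\subseteq Z(A)$, and moreover $\phi(xy)=\phi(x)y=0$ forces $\phi(A^{2})=0$; therefore $\phi\in C(A)$.

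Both inclusions are purely formal manipulations of the defining identities, so I do not expect a genuine obstacle; the only point requiring a little care is to make sure that, in each direction, one extracts all of the one-sided annihilation relations (both left and right) needed to conclude membership in $Z(A)$, rather than just one of them. No appeal to finite-dimensionality or to any property of the base field is needed, so the identity holds for arbitrary associative algebras.
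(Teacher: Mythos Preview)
Your argument is correct. Both inclusions are handled cleanly: in the forward direction you correctly use that $\phi(A)\subseteq Z(A)$ forces $\phi(x)y=x\phi(y)=0$ and that $\phi(A^{2})=0$ gives $\phi(xy)=0$, so both the derivation and centroid identities hold trivially; in the reverse direction subtracting the two centroid identities from the Leibniz rule yields $x\phi(y)=0$ and $\phi(x)y=0$ separately, which is exactly what is needed to land in $Z(A)$, and then $\phi(xy)=\phi(x)y=0$ kills $A^{2}$.

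As for comparison with the paper: the paper does not supply its own proof of this proposition. It is stated with a citation to \cite{FRS} and left unproved in the text, so there is nothing to compare your argument against here. Your proof is the standard elementary one and would serve perfectly well as a self-contained justification; your closing remark that no hypothesis on the dimension or the base field is used is also accurate.
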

\begin{pro}[\cite{FRS}]
Let $A$ be an associative algebra over a field $\mathbb{K}$. Then for any $d\in Der(A)$ and $\phi\in \Gamma(A)$:
\begin{enumerate}
\item The composition $d \circ \phi$ is in $\Gamma(A)$ if and only if $\phi \circ d$ is a central derivation of $A$;
\item The composition $d \circ \phi$ is a derivation of $A$ if and only if $[d\circ\phi]$ is a central derivation of $A$.
\end{enumerate}
\end{pro}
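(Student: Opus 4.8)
The plan is to reduce both equivalences to one structural fact: for every $d\in Der(A)$ and $\phi\in\Gamma(A)$, the commutator $[d,\phi]:=d\circ\phi-\phi\circ d$ again lies in $\Gamma(A)$. (I read the bracketed symbol $[d\circ\phi]$ appearing in item (2) as this commutator $[d,\phi]$.) Granting this, I would combine it with the two facts already recorded above, namely that $\phi\circ d\in Der(A)$ whenever $\phi\in\Gamma(A)$ and $d\in Der(A)$, and that $C(A)=\Gamma(A)\cap Der(A)$, and then finish by purely linear bookkeeping inside $End(A)$, using that $\Gamma(A)$ and $Der(A)$ are linear subspaces.

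First I would prove the lemma $[d,\phi]\in\Gamma(A)$ by expanding $[d,\phi](xy)=d(\phi(xy))-\phi(d(xy))$ directly. Writing $\phi(xy)=\phi(x)y$ and then applying the Leibniz rule for $d$ gives $d(\phi(xy))=d(\phi(x))\,y+\phi(x)\,d(y)$; applying the Leibniz rule first and then the centroid identities to the two resulting summands gives $\phi(d(xy))=\phi(d(x))\,y+x\,\phi(d(y))$. Subtracting, the cross terms cancel since $\phi(x)\,d(y)=\phi(x\cdot d(y))=x\,\phi(d(y))$ (this step uses both forms of $\phi(ab)=\phi(a)b=a\phi(b)$), and one is left with $[d,\phi](xy)=[d,\phi](x)\,y$. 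Repeating the computation with the decomposition $\phi(xy)=x\phi(y)$ in place of $\phi(xy)=\phi(x)y$ yields $[d,\phi](xy)=x\,[d,\phi](y)$, so $[d,\phi]\in\Gamma(A)$.

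With the lemma in hand, both items are immediate. For (1): from $\phi\circ d=(d\circ\phi)-[d,\phi]$ and $[d,\phi]\in\Gamma(A)$, membership of $d\circ\phi$ in $\Gamma(A)$ is equivalent to membership of $\phi\circ d$ in $\Gamma(A)$; since $\phi\circ d$ always lies in $Der(A)$, the latter is equivalent, via $C(A)=\Gamma(A)\cap Der(A)$, to $\phi\circ d$ being a central derivation. For (2): from $[d,\phi]=(d\circ\phi)-(\phi\circ d)$ and $\phi\circ d\in Der(A)$, membership of $d\circ\phi$ in $Der(A)$ is equivalent to membership of $[d,\phi]$ in $Der(A)$; together with the lemma $[d,\phi]\in\Gamma(A)$ and $C(A)=\Gamma(A)\cap Der(A)$, this says precisely that $[d,\phi]$ is a central derivation. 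I expect the only delicate point to be the lemma, and within it the one thing needing care is choosing which form of the centroid identity $\phi(ab)=\phi(a)b=a\phi(b)$ to apply at each step so the cross terms cancel; the rest is formal.
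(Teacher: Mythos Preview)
Your proof is correct: the key lemma that $[d,\phi]\in\Gamma(A)$ is established by a careful double application of the centroid identities and the Leibniz rule, and the two equivalences then follow exactly as you describe from the previously stated facts $\phi\circ d\in Der(A)$ and $C(A)=\Gamma(A)\cap Der(A)$. The paper itself does not supply a proof of this proposition---it is quoted from \cite{FRS} with the remark that ``the proof of some of the facts given below can be found in \cite{FRS}''---so there is no in-paper argument to compare your approach against; your self-contained argument is the natural one.
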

\section{Procedure for finding  derivations}
Let $A$  be an $n$-dimensional associative algebra and $d$ be its derivation.  Fix a basis $\{e_1, e_2,..., e_n\}$ of $A$. Particularly,
$$d\circ L_{e_{i}}(y)=L_{d(e_{i})}(y)+L_{e_i}\circ d(y)\quad \text{for basis vectors $e_{i}$}\quad i=1,2,\ldots,n.$$
we have\\
$\Longleftrightarrow$
$$(d\circ L_{e_i}-L_{e_{i}}\circ d)(y)=L_{d(e_{i})}(y)$$
An element $d$ of the derivations  being a linear transformation  of the vector space $A$ is represented in a matrix form $(d_{ij})_{i,j=1, 2,\cdots,n}$ i.e., $d(e_{i})=\sum\limits_{j=1}^{n}d_{ji}e_{j},\quad i=1, 2, \cdots, n$
\begin{equation*}
(d\circ L_{e_i}-L_{e_i}\circ d)(y)= \sum\limits_{j=1}^{n}d_{ji}L_{e_{j}}(y)
\end{equation*}
\begin{equation}\label{wacal}
d\circ L_{e_{i}}-L_{e_i}\circ d= \sum\limits_{j=1}^{n}d_{ji} L_{e_{j}}  \quad\quad    \forall  i=1,2,\cdots,n
\end{equation}
The last equations along with structure of $A$ give constraints for elements of the matrix $d$. Solving the system of equations, we can find the matrix
$$d=(d_{ij})=\left(%
\begin{array}{ccccccccccccccc}\label{SYQ}
  d_{11} & d_{12} & \cdots & d_{1n} \\
  \vdots  & \vdots  & \vdots & \vdots   \\
  d_{n1} & d_{n2} & \cdots & d_{nn} \\
\end{array}%
\right).$$
As mention earlier, we provide the classification results of $4$-dimensional associative algebra from \cite{RRB} which we use in our study. Note that $As^{m}_{n}$ denotes $m^{th}$ isomorphism class of associative algebra in dimension $n$.
\subsection{Four-dimensional  Associative algebras}
\begin{theorem}\label{4dim}
Any four-dimensional complex associative algebra can be included in one of the following isomorphism classes of algebras:
\begin{align*}
& As_{4}^{1} :e_{1}e_{1}=e_{3}, \quad e_{2}e_{2}=e_{4};\\
& As_{4}^{2} :e_{1}e_{2}=e_{3}, \quad e_{2}e_{1}=e_{4};\\
& As_{4}^{3} :e_{1}e_{2}=e_{4}, \quad e_{3}e_{1}=e_{4};\\
& As_{4}^{4} :e_{1}e_{1}=e_{4}, \quad e_{2}e_{2}=e_{2}, \quad  e_{2}e_{3}=e_{3}\\
& As_{4}^{5} :e_1e_1=e_4, \quad e_2e_2=e_2, \quad e_3e_2=e_3;\\
\end{align*}
\begin{align*}
& As_{4}^{6} :e_1e_2=e_3, \quad e_2e_1=e_4, \quad e_2e_2=-e_3;\\
& As_{4}^{7} :e_1e_2=e_3, \quad e_2e_1=-e_3, \quad e_2e_2=e_4;\\
& As_{4}^{8} :e_1e_2=e_4, \quad e_2e_1=-e_4, \quad e_3e_3=e_4;\\
& As_{4}^{9}(\alpha ) :e_1e_2=e_4, \quad  e_2e_1=\frac{1+\alpha}{1-\alpha}e_4, \quad e_2e_2=e_3;\\
& As_{4}^{10} :e_1e_1=e_1, \quad e_1e_2=e_2, \quad e_1e_3=e_3, \quad e_1e_4=e_4;\\
& As_{4}^{11} :e_1e_1=e_1, \quad e_1e_2=e_2, \quad e_1e_3=e_3, \quad e_4e_1=e_4;\\
& As_{4}^{12} :e_1e_1=e_1, \quad e_1e_2=e_2, \quad e_1e_4=e_4, \quad e_2e_3=e_4;\\
& As_{4}^{13} :e_1e_1=e_1, \quad e_1e_2=e_2, \quad e_3e_3=e_3, \quad e_3e_4=e_4;\\
& As_{4}^{14} :e_1e_1=e_1, \quad e_1e_4=e_4, \quad e_2e_1=e_2, \quad e_2e_4=e_3;\\
& As_{4}^{15} :e_1e_1=e_1, \quad e_1e_4=e_4, \quad e_2e_1=e_2, \quad e_3e_1=e_3 ;\\
& As_{4}^{16} :e_1e_1=e_1, \quad e_2e_1=e_2, \quad e_3e_1=e_3, \quad e_4e_1=e_4;\\
& As_{4}^{17} :e_1e_1=e_1, \quad e_2e_1=e_2, \quad e_3e_2=e_4, \quad e_4e_1=e_4;\\
& As_{4}^{18} :e_1e_1=e_1, \quad e_2e_1=e_2, \quad e_3e_3=e_3, \quad e_4e_3=e_4;\\
& As_{4}^{19} :e_1e_1=e_1, \quad e_2e_2=e_2, \quad e_2e_4=e_4, \quad e_3e_1=e_3;\\
& As_{4}^{20} :e_1e_1=e_1, \quad e_2e_2=e_2, \quad e_3e_3=e_3, \quad e_4e_4=e_4;\\
& As_{4}^{21} :e_1e_1=e_3, \quad e_1e_3=e_4, \quad e_2e_2=-e_4, \quad e_3e_1=e_4;\\
& As_{4}^{22} :e_1e_1=e_4, \quad e_1e_2=e_3, \quad e_2e_1=-e_3, \quad e_2e_2=-2e_3+e_4;\\
& As_{4}^{23}(\mu) :e_1e_1=e_4, \quad e_1e_2=e_3, \quad e_2e_1=-\mu e_4, \quad e_2e_2=-e_3;\\
& As_{4}^{24} :e_1e_1=e_4, \quad e_1e_2=e_4, \quad e_2e_1=-e_4, \quad e_3e_3=e_4;\\
& As_{4}^{25} :e_1e_1=e_4, \quad e_1e_4=-e_3, \quad e_2e_1=e_3, \quad e_4e_1=-e_3;\\
& As_{4}^{26} :e_1e_1=e_1, \quad e_1e_2=e_2, \quad e_1e_3=e_3, \quad e_1e_4=e_4, \quad e_2e_1=e_2;\\
& As_{4}^{27} :e_1e_1=e_1, \quad e_1e_2=e_2, \quad e_1e_4=e_4, \quad e_3e_1=e_3, \quad e_4e_1=e_4;\\
& As_{4}^{28} :e_1e_1=e_1, \quad e_1e_2=e_2, \quad e_2e_1=e_2, \quad e_3e_1=e_3, \quad e_4e_1=e_4;\\
& As_{4}^{29} :e_1e_1=e_1, \quad e_1e_3=e_3, \quad e_1e_4=e_4, \quad e_2e_2=e_2, \quad e_3e_2=e_3;\\
& As_{4}^{30} :e_1e_1=e_1, \quad e_1e_3=e_3, \quad e_2e_2=e_2, \quad e_2e_4=e_4, \quad e_4e_1=e_4;\\
& As_{4}^{31} :e_1e_1=e_1, \quad e_2e_2=e_2, \quad e_2e_3=e_3, \quad e_3e_1=e_3, \quad e_4e_1=e_4;\\
& As_{4}^{32} :e_1e_1=e_1, \quad e_2e_2=e_2, \quad e_2e_3=e_3, \quad e_3e_1=e_3, \quad e_4e_2=e_4;\\
& As_{4}^{33} :e_1e_1=e_1, \quad e_2e_2=e_2, \quad e_3e_2=e_3, \quad e_4e_3=e_3, \quad e_4e_4=e_4;\\
& As_{4}^{34} :e_1e_1=e_1, \quad e_2e_2=e_2, \quad e_3e_3=e_3, \quad e_3e_4=e_4, \quad e_4e_3=e_4;\\
& As_{4}^{35} :e_1e_1=e_4, \quad e_1e_2=\lambda e_4, \quad e_2e_1=-\lambda e_4, \quad e_2e_2=e_4, \quad e_3e_3=e_4;\\
& As_{4}^{36} :e_1e_1=e_4, \quad e_1e_4=-e_3, \quad e_2e_1=e_3, \quad e_2e_2=e_3, \quad e_4e_1=-e_3;\\
\end{align*}
\begin{align*}
& As_{4}^{37} :e_1e_2=e_4, \quad e_1e_3=e_4, \quad e_2e_1=-e_4, \quad e_2e_2=e_4, \quad e_3e_1=e_4;\\
& As_{4}^{38} :e_1e_1=e_1, \quad e_1e_2=e_2, \quad e_1e_3=e_3, \quad e_1e_4=e_4, \quad e_2e_1=e_2, \quad e_3e_1=e_3;\\
& As_{4}^{39} :e_1e_1=e_1, \quad e_1e_2=e_2, \quad e_1e_3=e_3, \quad e_1e_4=e_4, \quad e_3e_1=e_3, \quad e_3e_2=e_4;\\
& As_{4}^{40} :e_1e_1=e_1, \quad e_1e_2=e_2, \quad e_1e_3=e_3, \quad e_2e_1=e_2, \quad e_3e_1=e_3, \quad e_4e_1=e_4;\\
& As_{4}^{41} :e_1e_1=e_1, \quad e_1e_2=e_2, \quad e_2e_1=e_2, \quad e_3e_3=e_3, \quad e_3e_4=e_4, \quad e_4e_3=e_4;\\
& As_{4}^{42} :e_1e_1=e_1, \quad e_1e_3=e_3, \quad e_1e_4=e_4, \quad e_2e_2=e_2, \quad e_3e_1=e_3, \quad e_4e_2=e_4;\\
& As_{4}^{43} :e_1e_1=e_1, \quad e_1e_3=e_3, \quad e_2e_1=e_2, \quad e_2e_3=e_4, \quad e_3e_1=e_3, \quad e_4e_1=e_4;\\
& As_{4}^{44} :e_1e_1=e_1, \quad e_1e_3=e_3, \quad e_2e_2=e_2, \quad e_2e_4=e_4, \quad e_3e_1=e_3, \quad e_4e_1=e_4;\\
& As_{4}^{45} :e_1e_1=e_1, \quad e_1e_4=e_4, \quad e_2e_2=e_2, \quad e_2e_3=e_3, \quad e_3e_1=e_3, \quad e_4e_2=e_4;\\
& As_{4}^{46} :e_1e_1=e_1, \quad e_2e_2=e_2, \quad e_2e_3=e_3, \quad e_2e_4=e_4, \quad e_3e_1=e_3, \quad e_4e_1=e_4;\\
& As_{4}^{47} :e_1e_1=e_1, \quad e_2e_2=e_2, \quad e_2e_3=e_3, \quad e_2e_4=e_4, \quad e_3e_2=e_3, \quad e_4e_2=e_4;\\
& As_{4}^{48} :e_1e_1=e_2, \quad e_1e_2=e_3, \quad e_1e_3=e_4, \quad e_2e_1=e_3, \quad e_2e_2=e_4, \quad e_3e_1=e_4;\\
& As_{4}^{49} :e_1e_4=e_2, \quad e_2e_3=e_2, \quad e_3e_1=e_1, \quad e_3e_2=e_2, \quad e_3e_3=e_3, \quad e_4e_3=e_4;\\
& As_{4}^{50} :e_1e_1=e_1, \quad e_1e_2=e_2, \quad e_1e_3=e_3, \quad e_1e_4=e_4, \quad e_2e_1=e_2, \quad e_2e_2=e_4,\,\, e_4e_1=e_4;\\
& As_{4}^{51} :e_1e_1=e_1, \quad e_1e_2=e_2, \quad e_1e_3=e_3, \quad e_1e_4=e_4, \quad e_2e_1=e_2, \quad e_3e_1=e_3,\,\, e_4e_1=e_4;\\
& As_{4}^{52} :e_1e_1=e_1, \quad e_1e_2=e_2, \quad e_1e_4=e_4, \quad e_2e_1=e_2, \quad e_2e_2=e_4, \quad e_3e_1=e_3,\,\, e_4e_1=e_4;\\
& As_{4}^{53} :e_1e_1=e_1, \quad e_2e_2=e_2, \quad e_2e_3=e_3, \quad e_2e_4=e_4, \quad e_3e_2=e_3, \quad e_3e_3=e_4,\,\, e_4e_2=e_4;\\
& As_{4}^{54} :e_1e_1=e_1, \quad e_1e_2=e_2, \quad e_2e_3=e_1, \quad e_2e_4=e_2, \quad e_3e_1=e_3, \quad e_3e_2=e_4,\,\, e_4e_3=e_3, \\& e_4e_4=e_4;\\
& As_{4}^{55} :e_1e_1=e_1, \quad e_1e_2=e_2, \quad e_1e_3=e_3, \quad e_1e_4=e_4, \quad e_2e_1=e_2,\,\, e_2e_2=e_4, \quad e_3e_1=e_3, \\& e_4e_1=e_4;\\
& As_{4}^{56} :e_1e_1=e_1, \quad e_1e_2=e_2, \quad e_1e_3=e_3, \quad e_1e_4=e_4, \quad e_2e_1=e_2,
e_2e_3=e_4, \quad e_3e_1=e_3, \\& e_3e_2=-e_4, \ \ e_4e_1=e_4;\\
& As_{4}^{57} :e_1e_1=e_1, \quad e_1e_2=e_2, \quad e_1e_3=e_3, \quad e_1e_4=e_4, \quad e_2e_1=e_2, e_2e_2=e_3, \quad e_2e_3=e_4, \\& e_3e_1=e_3, \quad e_3e_2=e_4, \quad e_4e_1=e_4;\\
& As_{4}^{58} :e_1e_1=e_1, \quad e_1e_2=e_2, \quad e_1e_3=e_3, \quad e_1e_4=e_4, \quad e_2e_1=e_2,
\,\,\,\,\, e_2e_2=-e_4, \\& e_2e_3=-e_4, \quad e_3e_1=e_3, \qquad e_3e_2=e_4, \quad e_4e_1=e_4;
\end{align*}
for all $\alpha \in \mathbb{C}\setminus \{1\} \mbox{ and } \lambda, \mu \in \mathbb{C}$.
\end{theorem}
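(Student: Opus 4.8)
The statement is a classification up to isomorphism over $\mathbb{C}$, so the plan is to parametrise four-dimensional associative algebras by their structure constants and then reduce modulo change of basis. Fixing a basis $\{e_1,e_2,e_3,e_4\}$, an algebra structure is a tuple $(c_{ij}^{k})_{i,j,k=1}^{4}$ with $e_ie_j=\sum_{k}c_{ij}^{k}e_k$; the associative law applied to triples of basis vectors becomes a system of quadratic equations in the $c_{ij}^{k}$, and two solutions define isomorphic algebras exactly when they lie in one orbit of the natural $GL_4(\mathbb{C})$-action. Rather than attack this variety directly, I would organise the classification around Wedderburn structure theory, which cuts the problem into a few structurally rigid pieces.

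First I would use the vector-space decomposition $A=S\oplus N$ valid for any finite-dimensional associative algebra over $\mathbb{C}$, where $N=\operatorname{rad}(A)$ is a nilpotent ideal and $S\cong A/N$ is a semisimple subalgebra (Wedderburn--Malcev; the non-unital case is reduced to the unital one by a standard idempotent-lifting argument). Since finite-dimensional semisimple algebras over $\mathbb{C}$ are products of full matrix algebras, I would then split according to $\dim S\in\{0,1,2,3,4\}$. If $\dim S=4$ then $A=S$ is semisimple of dimension four, hence $A\cong\mathbb{C}^{4}$ or $A\cong M_2(\mathbb{C})$. If $1\le\dim S\le 3$ there is no room for a matrix block, so $S\cong\mathbb{C}^{\dim S}$, and $N$ (of dimension $4-\dim S$) carries the structure of an $S$-bimodule; I would enumerate the possible bimodule actions — each determined by how the orthogonal idempotents of $S$ act on the left and on the right of $N$ — then the residual nilpotent multiplication on $N$ compatible with that action, and finally normalise by the stabiliser of $S$ in $GL_4(\mathbb{C})$. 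This yields all algebras with nonzero semisimple part.

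The remaining case $\dim S=0$ is the nilpotent one, $A=N$, and here I would use the descending chain $A\supseteq A^{2}\supseteq A^{3}\supseteq A^{4}\supseteq A^{5}=0$. Classifying first by the dimension sequence $(\dim A^{2},\dim A^{3},\dim A^{4})$ and then, within each sequence, choosing adapted bases so that the few remaining free structure constants reduce to a short list of normal forms, reproduces the classification of nilpotent associative algebras of dimension at most four (the two- and three-dimensional nilpotent algebras serving as the inductive base, e.g. via one-dimensional central extensions). This accounts for the remaining entries of the list.

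It then remains to verify that the displayed classes, together with the one-parameter families $As_{4}^{9}(\alpha)$, $As_{4}^{23}(\mu)$ and $As_{4}^{35}$, are pairwise non-isomorphic and that the correct identifications among parameter values have been recorded. This I would do with isomorphism invariants: whether $A$ is unital, the isomorphism type of $A/\operatorname{rad}(A)$, $\dim A^{2}$, the dimensions of the left and right annihilators and of the centre $Z(A)$, the nilpotency index of $\operatorname{rad}(A)$, and, as a finer separator, $\dim\operatorname{Der}(A)$; for the parametric families one additionally computes the fibres of the classification map in $\alpha$ and $\mu$ (detecting symmetries such as $\alpha\leftrightarrow\alpha^{-1}$). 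I expect the main obstacle to be the volume of bookkeeping rather than any single conceptual point: the structure-theory reductions above are routine, but the sub-case $\dim S=1$ — a one-dimensional $S=\mathbb{C}$ acting on a three-dimensional radical that also carries its own nilpotent product — produces by far the largest tree of normal forms, and isolating the genuinely inequivalent ones there is where the real effort goes.
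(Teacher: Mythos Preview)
Your outline is the standard and correct strategy for such a classification --- Wedderburn--Malcev splitting $A=S\oplus N$, case analysis on $\dim S$, and a separate treatment of the nilpotent case via the lower central series --- and it is almost certainly what underlies the list. However, there is nothing to compare it to in this paper: Theorem~\ref{4dim} is not proved here. The paper imports the classification verbatim from \cite{RRB} (``we provide the classification results of $4$-dimensional associative algebra from \cite{RRB} which we use in our study'') and then uses it as raw input for computing derivations and centroids. So the paper's ``proof'' is a citation, not an argument.

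That said, one caution about your plan as written. The strategy is fine, but for a classification theorem the content is entirely in the execution: actually enumerating the $S$-bimodule structures on $N$ for each $\dim S$, listing the compatible nilpotent products, and then proving pairwise non-isomorphism (including pinning down the parameter identifications in $As_4^{9}(\alpha)$, $As_4^{23}(\mu)$, $As_4^{35}$). Your sketch names the invariants you would use but does not carry any of this out, so as a proof it is still a promissory note. If you want a self-contained argument rather than a citation, you would need to reproduce (or at least summarise case by case) the analysis in \cite{RRB}; if a citation suffices for your purposes, then simply citing \cite{RRB} as the paper does is the honest option.
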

In what follows, the following notations are introduced:
\begin{enumerate}
\item IC: isomorphism classes of algebras.
\item Dim: Dimensions of the algebra of derivations.
\item $m_{2}=d_{11}+d_{21}$,
\item $m_{3}=d_{33}-d_{11}$,
\item $m_{4}=\frac{2}{1-\alpha} d_{12}$,
\item  $m_{5}=d_{22}+d_{33}$,
\item $m_{6}=d_{33}-d_{22}$.
\end{enumerate}

Since the classification of four-dimensional associative algebra is already known from Theorem \ref{4dim}, Table \ref{dd} shows the derivations four-dimensional associative algebra as given below:
\begin{theorem}
The derivation of four-dimensional associative algebras has the following form:
\end{theorem}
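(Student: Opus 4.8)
The plan is to carry out the derivation computation algebra-by-algebra for each of the 58 isomorphism classes in Theorem \ref{4dim}, using the machinery of Section 3 (the ``Procedure for finding derivations''). For a fixed algebra $As_4^m$, I would write the unknown derivation as a matrix $d=(d_{ij})_{i,j=1}^4$ with $d(e_i)=\sum_{j=1}^4 d_{ji}e_j$, and then impose the defining Leibniz identity $d(e_ie_j)=d(e_i)e_j+e_id(e_j)$ on every pair of basis vectors $(e_i,e_j)$. Equivalently, by Theorem \ref{p2} one may use the operator identity \eqref{wacal}, $d\circ L_{e_i}-L_{e_i}\circ d=\sum_{j}d_{ji}L_{e_j}$, together with the analogous right-multiplication identity; in practice, testing against the finitely many nonzero products in each structure table is the quickest route. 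Each such identity yields a system of linear equations in the sixteen unknowns $d_{ij}$; solving it produces the general form of the derivation matrix, and the number of free parameters is the dimension recorded in the ``Dim'' column of Table \ref{dd}.

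The key steps, in order, are: (i) for each $As_4^m$, list its nonzero structure constants; (ii) expand $d(e_ie_j)=d(e_i)e_j+e_id(e_j)$ for each defining product and for each product that is forced to be zero but whose vanishing constrains $d$ (this second point matters, e.g. when $e_ie_j=0$ in the algebra but $d(e_i)e_j+e_id(e_j)$ need not be obviously zero); (iii) collect the resulting homogeneous linear relations among the $d_{ij}$ and reduce; (iv) read off a parametrized general solution and hence $\dim\mathrm{Der}(As_4^m)$; (v) where convenient, express the surviving entries using the abbreviations $m_2=d_{11}+d_{21}$, $m_3=d_{33}-d_{11}$, $m_4=\frac{2}{1-\alpha}d_{12}$, $m_5=d_{22}+d_{33}$, $m_6=d_{33}-d_{22}$ introduced just before the statement, which compress the answers for the parametric families $As_4^9(\alpha)$, $As_4^{23}(\mu)$, $As_4^{35}$ and several of the unital algebras. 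Assembling all 58 matrices into the single display constitutes the proof of the theorem.

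The main obstacle is sheer volume and bookkeeping rather than conceptual depth: the computation is routine linear algebra for each algebra, but there are 58 cases (three of them one-parameter families, so genuinely infinitely many algebras), and it is easy to drop a constraint coming from a ``zero'' product or to mishandle the parameter cases — for instance in $As_4^9(\alpha)$ the special value of $\alpha$ making $\frac{1+\alpha}{1-\alpha}=\pm 1$, or in $As_4^{23}(\mu)$ the value $\mu=1$, can change the rank of the linear system and hence the dimension of $\mathrm{Der}$. I would therefore organize the algebras into groups (nilpotent ones $As_4^1$--$As_4^3$, $As_4^6$--$As_4^9$, $As_4^{21}$--$As_4^{25}$, $As_4^{37}$, $As_4^{48}$; the unital/semisimple-flavored ones built on idempotents; and the mixed ones), handle each group with a uniform template, and cross-check each derivation matrix by verifying directly that a generic element of the claimed form does satisfy all the Leibniz identities. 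As a sanity check one can also confirm, in the cases claimed to be characteristically nilpotent in the abstract, that every derivation in the computed $\mathrm{Der}(A)$ is a nilpotent matrix.
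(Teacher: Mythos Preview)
Your proposal is correct and follows essentially the same approach as the paper: write the derivation as a matrix $(d_{ij})$, impose the Leibniz identity via the operator equation \eqref{wacal} on the basis products, solve the resulting linear system, and read off the general form and dimension case by case. The paper's proof carries this out explicitly only for $As_4^1$ and then asserts that the remaining 57 classes are handled ``similarly with little or no modification(s)'', so your more detailed organizational plan (grouping algebras, tracking constraints from zero products, watching parameter degenerations) is if anything more thorough than what the paper records.
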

\begin{table}[ht!]
\caption{Derivations of four-dimensional associative algebras}\label{dd}
\begin{center}
\small
\begin{tabular}{|c|c|c||c|c|c|} \hline
\textbf{IC} & \textbf{Derivation} & \textbf{Dim} & \textbf{IC} & \textbf{Derivation} & \textbf{Dim}\\
\hline
$As_{4}^{1}$ &$\left(%
\begin{array}{cccc}
  d_{11} & 0 & 0&0 \\
  0 & d_{22}& 0&0 \\
  d_{31}&d_{32}&2d_{11}&0 \\
  d_{41}&d_{42}&0&2d_{22}
\end{array}%
\right)$&$6$&

$As_{4}^{2}$ &$\left(%
\begin{array}{cccc}
   d_{11} & 0 & 0&0 \\
   0 & d_{22}& 0&0 \\
  d_{31}&d_{32}&m_{2}&0 \\
   d_{41}&d_{42}& 0&m_{2}
\end{array}%
\right)$&$6$\\ \hline

$As_{4}^{3}$&$\left(%
\begin{array}{cccc}
  d_{11} & 0 & 0&0 \\
   d_{21} & d_{22}& 0&0 \\
  -d_{21}&0&d_{22}&0 \\
  d_{41}&d_{42}&d_{43}&m_{2}
\end{array}%
\right)$&$6$&
$As_{4}^{4}$ &$\left(%
\begin{array}{cccc}
  d_{11} & 0 & 0&0 \\
   0 & 0& 0&0 \\
  0&d_{32}&d_{33}&0 \\
  d_{41}&0& 0&2d_{11}
\end{array}%
\right)$&$4$\\
\hline

$As_{4}^{5}$ &$\left(%
\begin{array}{cccc}
  d_{11} & 0 & 0&0 \\
   0 & 0& 0&0 \\
  0&d_{32}&d_{33}&0 \\
  d_{41}&0& 0&2d_{11}
\end{array}%
\right)$&$4$&

$As_{4}^{6}$& $\left(%
\begin{array}{cccc}
  d_{11} & 0 & 0&0 \\
   0 & d_{11}& 0&0 \\
  d_{31}&d_{32}&2d_{11}&0 \\
  d_{41}&d_{42}& 0&2d_{11}
\end{array}%
\right)$&$5$ \\ \hline

$As_{4}^{7}$&$\left(%
\begin{array}{cccc}
  d_{11} & d_{12} & 0&0 \\
   0 & d_{22}& 0&0 \\
  d_{31}&d_{32}&m_{2}&0 \\
  d_{41}&d_{42}& 0&2d_{22}
\end{array}%
\right)$&$7$&

$As_{4}^{8}$& $\left(%
\begin{array}{cccc}
  d_{11} & d_{12} & 0&0 \\
   d_{21} & m_{3}& 0&0 \\
  0&0&d_{33}&0 \\
  d_{41}&d_{42}& d_{43}&2d_{33}
\end{array}%
\right)$&$7$\\ \hline
\end{tabular}
\end{center}
\begin{flushright}
Continued to the next page
\end{flushright}
\end{table}
%
\clearpage
\begin{table}[ht!]
\begin{center}
\small
\begin{tabular}{|c|c|c||c|c|c|} \hline
\textbf{IC} & \textbf{Derivation} & \textbf{Dim} & \textbf{IC} & \textbf{Derivation} & \textbf{Dim}\\
\hline

$As_{4}^{9}(\alpha )$& $\left(%
\begin{array}{cccc}
  d_{11} & d_{12} & 0&0 \\
  0 & d_{22}& 0&0 \\
  d_{31}&d_{32}&2d_{22}&0 \\
  d_{41}&d_{42}&m_{4}&m_{2}
\end{array}%
\right)$&$7$&

$As_{4}^{10}$& $\left(%
\begin{array}{cccc}
  0 & 0 & 0&0 \\
  d_{21} & d_{22}& d_{23}&d_{24} \\
  d_{31}&d_{32}&d_{33}&d_{34} \\
  d_{41}&d_{42}&d_{43}&d_{44}
\end{array}%
\right)$&$12$\\
\hline
$As_{4}^{11}$& $\left(%
\begin{array}{cccc}
  0 & 0 & 0&0 \\
  d_{21} & d_{22}& d_{23}&0 \\
  d_{31}& d_{32}&d_{33}&0 \\
  d_{41}&0&0&d_{44}
\end{array}%
\right)$&$8$&

 $As_{4}^{12}$& $\left(%
\begin{array}{cccc}
  0 & 0 & 0&0 \\
  d_{21} & d_{22}& 0&0 \\
  0& 0&d_{33}&0 \\
  d_{41}&d_{42}&-d_{21}&m_{5}
\end{array}%
\right)$&$5$\\
\hline
$As_{4}^{13}$& $\left(%
\begin{array}{cccc}
  0 & 0 & 0&0 \\
  d_{21} & d_{22}& 0&0 \\
  0& 0&0&0 \\
  0&0&d_{43}&d_{44}
\end{array}%
\right)$&$4$&

$As_{4}^{14}$& $\left(%
\begin{array}{cccc}
  0 & 0 & 0&0 \\
  d_{21} & d_{22}& 0&0 \\
  0& d_{32}&d_{33}&d_{21} \\
  d_{32}&0&0&m_{6}
\end{array}%
\right)$&$4$\\
 \hline
$As_{4}^{15}$& $\left(%
\begin{array}{cccc}
  0 & 0 & 0&0 \\
  d_{21} & d_{22}& d_{23}&0 \\
  d_{31}& d_{32}&d_{33}&0 \\
  d_{41}&0&0&d_{44}
\end{array}%
\right)$&$8$&
$As_{4}^{16}$& $\left(%
\begin{array}{cccc}
  0 & 0 & 0&0 \\
  d_{21} & d_{22}& d_{23}&d_{24} \\
  d_{31}& d_{32}&d_{33}&d_{34} \\
  d_{41}&d_{42}&d_{43}&d_{44}
\end{array}%
\right)$&$12$\\ \hline
$As_{4}^{17}$& $\left(%
\begin{array}{cccc}
  0 & 0 & 0&0 \\
  d_{21} & d_{22}& 0&0 \\
  0& 0&d_{33}&0 \\
  d_{41}&d_{42}&-d_{21}&m_{5}
\end{array}%
\right)$&$5$&
$As_{4}^{18}$& $\left(%
\begin{array}{cccc}
  0 & 0 & 0&0 \\
  d_{21} & d_{22}& 0&0 \\
  0& 0&0&0 \\
  0&0&d_{43}&d_{44}
\end{array}%
\right)$&$4$\\
\hline
$As_{4}^{19}$& $\left(%
\begin{array}{cccc}
  0 & 0 & 0&0 \\
  0 & 0& 0&0 \\
 d_{31}& 0&d_{33}&0 \\
  0&d_{42}&0&d_{44}
\end{array}%
\right)$&$4$&
$As_{4}^{20}$& $\left(%
\begin{array}{cccc}
  0 & 0 & 0&0 \\
  0 & 0& 0&0 \\
 0& 0&0&0 \\
  0&0&0&0
\end{array}%
\right)$&$0$\\
\hline
$As_{4}^{21}$
& $\left(%
\begin{array}{cccc}
  d_{11}& 0 & 0&0 \\
  d_{21} & \frac{3}{2} d_{11}& 0&0 \\
 d_{31}& d_{21}&2d_{11}&0 \\
  d_{41}&d_{42}&2d_{31}&3d_{11}
\end{array}%
\right)$&$5$&
$As_{4}^{22}$& $\left(%
\begin{array}{cccc}
  d_{11}& 0 & 0&0 \\
  0 & 0& 0&0 \\
 d_{31}& d_{32}&2d_{11}&0 \\
  d_{41}&d_{42}&0&2d_{11}
\end{array}%
\right)$&$5$\\
\hline
$As_{4}^{23}(\mu)$& $\left(%
\begin{array}{cccc}
  d_{11}& 0 & 0&0 \\
  0 & d_{11}& 0&0 \\
 d_{31}& d_{32}&2d_{11}&0 \\
  d_{41}&d_{42}&0&2d_{11}
\end{array}%
\right)$&$5$&
$As_{4}^{24}$& $\left(%
\begin{array}{cccc}
  d_{11}& 0 & 0&0 \\
  d_{21} & d_{11}& 0&0 \\
 0& 0&d_{11}&0 \\
  d_{41}&d_{42}&d_{43}&2d_{11}
\end{array}%
\right)$&$5$\\
\hline
$As_{4}^{25}$ & $\left(%
\begin{array}{cccc}
  d_{11}& 0 & 0&0 \\
  d_{21} & 2d_{11}& 0&0 \\
 d_{31}& d_{32}&3d_{11}&m_{7} \\
  d_{41}&0&0&2d_{11}
\end{array}%
\right)$&$5$&
$As_{4}^{26}$& $\left(%
\begin{array}{cccc}
  0& 0& 0&0\\
  0 & d_{22}& 0&0 \\
 d_{31}& 0&d_{33}&d_{34}\\
  d_{41}&0&d_{43}&d_{44}
\end{array}%
\right)$&$7$\\
\hline
\end{tabular}
\end{center}
\begin{flushright}
Continued to the next page
\end{flushright}
\end{table}
%
\clearpage
\begin{table}[ht!]
\begin{center}
\small
\begin{tabular}{|c|c|c||c|c|c|} \hline
\textbf{IC} & \textbf{Derivation} & \textbf{Dim} & \textbf{IC} & \textbf{Derivation} & \textbf{Dim}\\
\hline

$As_{4}^{27}$& $\left(%
\begin{array}{cccc}
  0& 0& 0&0\\
  d_{21}& d_{22}& 0&0 \\
 d_{31}& 0&d_{33}&d_{34}\\
  0&0&0&d_{44}
\end{array}%
\right)$&$5$&
$As_{4}^{28}$& $\left(%
\begin{array}{cccc}
  0& 0& 0&0\\
  0& d_{22}& 0&0 \\
 d_{31}& 0&d_{33}&d_{34}\\
  d_{41}&0&d_{43}&d_{44}
\end{array}%
\right)$&$7$\\
\hline
$As_{4}^{29}$ & $\left(%
\begin{array}{cccc}
   0& 0& 0&0\\
  0& 0& 0&0 \\
 d_{31}& -d_{31}&d_{33}&0\\
  d_{41}&0&0&d_{44}
\end{array}%
\right)$&$4$&
$ As_{4}^{30}$& $\left(%
\begin{array}{cccc}
  0& 0& 0&0\\
  0& 0& 0&0 \\
 d_{31}& 0&d_{33}&0\\
  d_{41}&-d_{41}&0&d_{44}
\end{array}%
\right)$&$4$\\
\hline
$As_{4}^{31}$ & $\left(%
\begin{array}{cccc}
  0& 0& 0&0\\
  0& 0& 0&0 \\
 d_{31}& -d_{31}&d_{33}&0\\
  d_{41}&0&0&d_{44}
\end{array}%
\right)$&$4$&
$As_{4}^{32}$& $\left(%
\begin{array}{cccc}
  0& 0& 0&0\\
  0& 0& 0&0 \\
 d_{31}&-d_{31}&d_{33}&0\\
  0&d_{42}&0&d_{44}
\end{array}%
\right)$&$4$\\
\hline
$As_{4}^{33}$& $\left(%
\begin{array}{cccc}
  0& 0& 0&0\\
  0& 0& 0&0 \\
 0&d_{32}&d_{33}&-d_{32}\\
  0&0&0&0
\end{array}%
\right)$&$2$&
$As_{4}^{34}$& $\left(%
\begin{array}{cccc}
  0& 0& 0&0\\
  0& 0& 0&0 \\
 0&0&0&0\\
  0&0&0&d_{44}
\end{array}%
\right)$&$1$\\ \hline
$As_{4}^{35}$& $\left(%
\begin{array}{cccc}
  d_{11}& -d_{21}& 0&0\\
  d_{21}& d_{11}& 0&0 \\
 0&0&0&0\\
  d_{41}&d_{42}&d_{43}&2d_{11}
\end{array}%
\right)$&$5$&
$ As_{4}^{36}$& $\left(%
\begin{array}{cccc}
  0& 0& 0&0\\
  d_{21}& 0& 0&0 \\
 d_{31}&d_{32}&0&m_{7}\\
  d_{41}&d_{21}&0&0
\end{array}%
\right)$&$4$\\
\hline
$As_{4}^{37}$ & $\left(%
\begin{array}{cccc}
  d_{11}& 0& 0&0\\
  d_{21}& d_{11}& 0&0 \\
 0&-d_{21}&d_{11}&0\\
  d_{41}&d_{42}&d_{43}&2d_{11}
\end{array}%
\right)$&$5$&
$As_{4}^{38}$ & $\left(%
\begin{array}{cccc}
  0& 0& 0&0\\
  0& d_{22}& d_{23}&0 \\
 0&d_{32}&d_{33}&0\\
  d_{41}&0&0&d_{44}
\end{array}%
\right)$&$6$\\
\hline
$As_{4}^{39}$ & $\left(%
\begin{array}{cccc}
  0& 0& 0&0\\
  d_{21}& d_{22}&0&0 \\
 0&0&d_{33}&0\\
  d_{41}&d_{42}&d_{21}&m_{5}
\end{array}%
\right)$&$5$&
$As_{4}^{40}$ & $\left(%
\begin{array}{cccc}
  0& 0& 0&0\\
  0& d_{22}& d_{23}&0 \\
 0&d_{32}&d_{33}&0\\
  d_{41}&0&0&d_{44}
\end{array}%
\right)$&$6$\\
\hline
$As_{4}^{41}$ & $\left(%
\begin{array}{cccc}
  0& 0& 0&0\\
  0& d_{22}&0&0 \\
 0&0&0&0\\
  0&0&0&d_{44}
\end{array}%
\right)$&$2$&
$As_{4}^{42}$  & $\left(%
\begin{array}{cccc}
  0& 0& 0&0\\
  0& 0&0&0 \\
 0&0&d_{33}&0\\
  d_{41}&-d_{41}&0&d_{44}
\end{array}%
\right)$&$3$\\
\hline
$As_{4}^{43}$ & $\left(%
\begin{array}{cccc}
  0& 0& 0&0\\
  d_{21}&d_{22}&0&0 \\
 0&0&d_{33}&0\\
  d_{41}&d_{42}&d_{21}&m_{5}
\end{array}%
\right)$&$5$&
$As_{4}^{44}$& $\left(%
\begin{array}{cccc}
  0& 0& 0&0\\
  0&0&0&0 \\
 0&0&d_{33}&0\\
  d_{41}&-d_{41}&0&d_{44}
\end{array}%
\right)$&$3$\\
\hline
\end{tabular}
\end{center}
\begin{flushright}
Continued to the next page
\end{flushright}
\end{table}
\clearpage
%
%
%
\begin{table}[ht!]
\begin{center}
\small
\begin{tabular}{|l|c|c||l|c|c|} \hline
\textbf{IC} & \textbf{Derivation} & \textbf{Dim} & \textbf{IC} & \textbf{Derivation} & \textbf{Dim}\\
\hline
$As_{4}^{45}$ & $\left(%
\begin{array}{cccc}
  0& 0& 0&0\\
  0&0&0&0 \\
 d_{31}&-d_{31}&d_{33}&0\\
  d_{41}&-d_{41}&0&d_{44}
\end{array}%
\right)$&$4$&
$As_{4}^{46}$ & $\left(%
\begin{array}{cccc}
  0& 0& 0&0\\
  0&0&0&0 \\
 d_{31}&-d_{31}&d_{33}&d_{34}\\
  d_{41}&-d_{41}&d_{43}&d_{44}
\end{array}%
\right)$&$6$\\
\hline

$ As_{4}^{47}$ & $\left(%
\begin{array}{cccc}
  0& 0& 0&0\\
  0&0&0&0 \\
 0& 0&d_{33}&d_{34}\\
  0&0&d_{43}&d_{44}
\end{array}%
\right)$&$4$&
$As_{4}^{48}$ & $\left(%
\begin{array}{cccc}
  d_{11}& 0& 0&0\\
  d_{21}&2d_{11}&0&0 \\
 d_{31}& 2d_{21}&3d_{11}&0\\
  d_{41}&2d_{31}&3d_{21}&4d_{11}
\end{array}%
\right)$&$4$\\
\hline
$As_{4}^{49}$ & $\left(%
\begin{array}{cccc}
  d_{11}& 0& d_{13}&0\\
  d_{21}&d_{22}&0&-d_{13} \\
 0& 0&0&0\\
  0&0&-d_{21}&m_{1}
\end{array}%
\right)$&$4$&
$As_{4}^{50}$ & $\left(%
\begin{array}{cccc}
  0& 0& 0&0\\
  0&d_{22}&0&0 \\
 d_{31}& 0&d_{33}&0\\
  0&d_{42}&0&2d_{22}
\end{array}%
\right)$&$4$\\
\hline
$As_{4}^{51}$ & $\left(%
\begin{array}{cccc}
  0& 0& 0&0\\
  0&d_{22}&d_{23}&d_{24} \\
 0& d_{32}&d_{33}&d_{34}\\
  0&d_{42}&d_{43}&d_{44}
\end{array}%
\right)$&$9$&
$As_{4}^{52}$  & $\left(%
\begin{array}{cccc}
  0& 0& 0&0\\
  0&d_{22}&0&0\\
 d_{31}& 0&d_{33}&0\\
  0&d_{42}&0&2d_{22}
\end{array}%
\right)$&$4$\\
\hline
$As_{4}^{53}$  & $\left(%
\begin{array}{cccc}
  0& 0& 0&0\\
  0&0&0&0\\
 0& 0&d_{33}&0\\
  0&0&d_{43}&2d_{33}
\end{array}%
\right)$&$2$&
$As_{4}^{54}$   & $\left(%
\begin{array}{cccc}
  0& d_{12}& -d_{21}&0\\
  d_{21}&d_{22}&0&-d_{21}\\
- d_{12}& 0&-d_{22}&-d_{12}\\
  0&-d_{12}&d_{13}&0
\end{array}%
\right)$&$3$\\
\hline
$As_{4}^{55}$ & $\left(%
\begin{array}{cccc}
  0& 0& 0&0\\
  0&d_{22}&0&0\\
 0& d_{32}&d_{33}&0\\
  0&d_{42}&d_{43}&2d_{22}
\end{array}%
\right)$&$5$&
$As_{4}^{56}$ & $\left(%
\begin{array}{cccc}
  0& 0& 0&0\\
  0&d_{22}&d_{23}&0\\
 0& d_{32}&d_{33}&0\\
  0&d_{42}&d_{43}&m_{5}
\end{array}%
\right)$&$6$\\
\hline
$As_{4}^{57}$& $\left(%
\begin{array}{cccc}
  0& 0& 0&0\\
  0&d_{22}&0&0\\
 0& d_{32}&2d_{22}&0\\
  0&d_{42}&2d_{32}&3d_{22}
\end{array}%
\right)$&$3$&
$As_{4}^{58}$& $\left(%
\begin{array}{cccc}
  0& 0& 0&0\\
  0&d_{22}&0&0\\
 0& d_{32}&d_{22}&0\\
  0&d_{42}&d_{43}&2d_{22}
\end{array}%
\right)$&$4$\\
\hline
\end{tabular}
\end{center}
\end{table}
\begin{proof}
From Theorem \ref{4dim}, we provide the proof only for one case to illustrate the approach used, the other cases can be carried out similarly with little or no modification(s).\\
Let us consider $As_{4}^{1}$. Applying the system of equation (\ref{wacal}), we get $d_{12}=d_{13}=d_{14}=d_{21}=d_{23}=d_{24}=d_{34}=d_{43}=0$, $d_{33}=2d_{11}, d_{44}=2d_{22}$

\noindent Hence, the derivation for $As_{4}^{1}$ are given as follows $d=$ $\left(%
\begin{array}{cccc}
  d_{11} & 0 & 0&0 \\
  0 & d_{22}& 0&0 \\
  d_{31}&d_{32}&2d_{11}&0 \\
  d_{41}&d_{42}&0&2d_{22}
\end{array}%
\right).$

\noindent So, $d_1=\left( \begin {array}{cccc} 1&0&0&0 \\ 0&0&0&0 \\ 0&0&2&0 \\0&0&0&0 \end{array} \right)$,
 $d_2=\left( \begin {array}{cccc} 0&0&0&0 \\ 0&1&0&0 \\ 0&0&0&0 \\ 0&0&0&2 \end{array} \right)$,
 $d_3=\left( \begin {array}{cccc} 0&0&0&0 \\ 0&0&0&0 \\ 1&0&0&0 \\ 0&0&0&0 \end{array} \right)$, \\\\

\noindent $d_4=\left( \begin {array}{cccc} 0&0&0&0 \\ 0&0&0&0 \\ 0&1&0&0 \\ 0&0&0&0 \end{array} \right)$, $\quad d_5=\left( \begin {array}{cccc} 0&0&0&0 \\ 0&0&0&0 \\ 0&0&0&0 \\1&0&0&0 \end{array} \right)$,$\quad d_6=\left( \begin {array}{cccc} 0&0&0&0 \\ 0&0&0&0 \\ 0&0&0&0 \\0&1&0&0 \end{array} \right)$.

\noindent is a basis of $Der(A)$ and $dimDer(A)=6.$\\
The derivation of the remaining parts of dimension four algebras can be carried out in a similar manner as shown above.
\end{proof}
\begin{remark}
\noindent
\begin{enumerate}
\item There is  one class of  characteristically nilpotent associative algebras in the list of isomorphism classes of  four dimensional associative algebras.
\item The dimensions of the derivation algebras in this case  vary between zero and nine.
\end{enumerate}
\end{remark}
\section{Description of centroids of four dimensional associative algebras}
In this section, we give the description of centroids of four-dimensional complex associative algebras.
Let $\{e_1, e_2, e_3, e_4 \}$ be  a basis of a $4$-dimensional associative algebra $A$, then $e_ie_j=\sum\limits_{k=1}^{4}\gamma_{ij}^{k}e_{k}.$
The coefficients, $\{\gamma_{ij}^{k}\}\in \mathbb{C}^{4^{3}}$, of the above linear combinations are called the structure constants of $A$ on the basis $\{e_1, e_2, e_3, e_4 \}.$ An element $\phi$ of the centroid $\Gamma(A)$, being a linear transformation of the vector space $A$, is represented in a matrix form $(a_{ij})_{i,j=1,2,...,4},$ i.e. $\phi(e_i)=\sum\limits_{j=1}^{4}a_{ji}e_{j},$ $i=1,2,...,4.$
must satisfy the following systems of equations:
\begin{equation}\label{qaaq}
\sum_{k=1}^{4}(\gamma_{ij}^{t}a_{kt}-a_{ti}\gamma_{tj}^k)=0
\end{equation}
\begin{equation}\label{qaaaq}
\sum_{k=1}^{4}(\gamma_{ij}^{k}a_{kt}-a_{tj}\gamma_{it}^k)=0
\end{equation}
It is observed that if the structure constants $\{\gamma_{ij}^{k}\}$ of the associative algebra $A$ are given, then, in order to describe its centroid one has to solve the system of equations above with respect to the $a_{ij}$, $i,j=1,2,...,4.$ 
Again, we use  classification results of four-dimensional complex associative algebras from \cite{RRB} to compute the centroids of four dimensional complex associative algebras.
\begin{theorem}
The centroids of four dimensional complex associative algebras are given as follows:
\end{theorem}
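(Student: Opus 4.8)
The plan is to proceed exactly as in the analogous computation for the derivation algebras, exploiting the explicit classification in Theorem~\ref{4dim}. For each of the $58$ isomorphism classes (including the one-parameter families $As_4^9(\alpha)$, $As_4^{23}(\mu)$ and the family with parameter $\lambda$), I would read off the structure constants $\gamma_{ij}^k$ from the multiplication table, substitute them into the two defining systems \eqref{qaaq} and \eqref{qaaaq}, and solve the resulting homogeneous linear system in the $16$ unknowns $a_{ij}$. Since the defining relation of the centroid is $\phi(e_ie_j)=\phi(e_i)e_j=e_i\phi(e_j)$, in practice it is cleaner to apply this identity directly to each pair of basis vectors $(e_i,e_j)$ for which $e_ie_j\neq 0$, together with the constraints coming from pairs with $e_ie_j=0$; this yields the same linear conditions on the $a_{ij}$ as \eqref{qaaq}--\eqref{qaaaq} but with less index bookkeeping.

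The key steps, in order, are: (i) fix the basis $\{e_1,e_2,e_3,e_4\}$ and write $\phi(e_i)=\sum_j a_{ji}e_j$; (ii) for each algebra, impose $\phi(e_ie_j)=\phi(e_i)e_j$ and $\phi(e_ie_j)=e_i\phi(e_j)$ for all $i,j$; (iii) collect the linear equations on the $a_{ij}$ and reduce them; (iv) record the general solution as a parametrized matrix, and extract a basis of $\Gamma(A)$ together with $\dim\Gamma(A)$. As in the derivation case, I would present one representative computation in full — $As_4^1$ is a natural choice, where $e_1e_1=e_3$ and $e_2e_2=e_4$ quickly force $a_{ij}=0$ for the off-diagonal "mixing" entries and impose relations such as $a_{33}=a_{11}$, $a_{44}=a_{22}$ — and then assert that the remaining $57$ cases follow by the identical routine, displaying the answers in a table in the same format as Table~\ref{dd}.

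The main obstacle is not conceptual but organizational: it is a finite but lengthy case analysis, and the real risk is transcription or arithmetic slips across so many algebras, especially for the parametric families, where one must be careful that the description of $\Gamma(A)$ is uniform in $\alpha$, $\mu$, $\lambda$ (and check whether special parameter values — e.g. $\alpha$ making $\tfrac{1+\alpha}{1-\alpha}$ take a degenerate value, or $\mu=1$, or $\lambda=0$ — enlarge the centroid). A secondary point worth flagging is consistency with the earlier results already in the paper: by the quoted Proposition, $C(A)=\Gamma(A)\cap\mathrm{Der}(A)$, so the computed centroids can be cross-checked against the derivation table — any $\phi\in\Gamma(A)$ that is also a derivation must send $A$ into $Z(A)$ and kill $A^2$, which gives a useful sanity check on each entry. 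I would therefore organize the proof as: one worked example, the statement that all other cases are analogous, and (implicitly) the invitation to verify via the $C(A)=\Gamma(A)\cap\mathrm{Der}(A)$ relation.
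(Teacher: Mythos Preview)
Your proposal is correct and follows essentially the same route as the paper: read off the structure constants from Theorem~\ref{4dim}, feed them into the linear systems \eqref{qaaq}--\eqref{qaaaq}, solve for the $a_{ij}$, work $As_4^1$ explicitly, and declare the remaining cases analogous (the paper also spells out $As_4^2$ and $As_4^3$). Your extra remarks on cross-checking via $C(A)=\Gamma(A)\cap\mathrm{Der}(A)$ and on watching for special parameter values in the families are sensible additions but do not change the method.
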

\begin{table}[ht!]
\caption{Centroids of four-dimensional associative algebras}
\begin{center}
\small
\begin{tabular}{|c|c|c||c|c|c|c|c|} \hline
\textbf{IC}&\textbf{Centroid }&\textbf{Dim}&\textbf{IC}&\textbf{Centroid }&\textbf{Dim}\\
& \textbf{$\Gamma(A)$} & && $\Gamma(A)$&  \\
\hline
$As_{4}^{1}$&$\left(%
\begin{array}{cccc}
  a_{11} & 0 & 0&0 \\
  0 & a_{22}& 0&0 \\
  0&0&a_{11}&0 \\
  0&0&0&a_{11}
\end{array}%
\right)$&$2$&
$As_{4}^{2}$&$\left(%
\begin{array}{cccc}
   a_{11} & 0 & 0&0 \\
   0 & a_{11}& 0&0 \\
  a_{31}&a_{32}&a_{11}&0 \\
   a_{41}&a_{42}& 0&a_{11}
\end{array}%
\right)$&$5$\\
\hline
$As_{4}^{3}$ &$\left(%
\begin{array}{cccc}
  a_{11} & 0 & 0&0 \\
 0 & a_{11} & 0&0 \\
  0&0&a_{11} &0 \\
  a_{41}&a_{42}&a_{43}&a_{11}
\end{array}%
\right)$&$4$&
$As_{4}^{4}$&$\left(%
\begin{array}{cccc}
  a_{11} & 0 & 0&0 \\
   0 & a_{22}& 0&0 \\
  0&0&a_{22}&0 \\
  a_{41}&0& 0&a_{11}
\end{array}%
\right)$&$3$\\
\hline
$As_{4}^{5}$&$\left(%
\begin{array}{cccc}
  a_{11} & 0 & 0&0 \\
   0 & a_{22}& 0&0 \\
  0&0&a_{22}&0 \\
  a_{41}&0& 0&a_{11}
\end{array}%
\right)$&$3$&
$As_{4}^{6}$& $\left(%
\begin{array}{cccc}
  a_{11} & 0 & 0&0 \\
   0 &  a_{11}& 0&0 \\
  a_{31}&a_{32}& a_{11}&0 \\
  a_{41}&a_{42}& 0& a_{11}
\end{array}%
\right)$&$5$\\
\hline
\end{tabular}
\end{center}
\begin{flushright}
Continued to the next page
\end{flushright}
\end{table}
\clearpage
\begin{table}[ht!]
\begin{center}
\small
\begin{tabular}{|c|c|c||c|c|c|c|c|} \hline
\textbf{IC}&\textbf{Centroid }&\textbf{Dim}&\textbf{IC}&\textbf{Centroid }&\textbf{Dim}\\
& \textbf{$\Gamma(A)$} & && $\Gamma(A)$&  \\
\hline

$As_{4}^{7}$&$\left(%
\begin{array}{cccc}
  a_{11} & a_{12} & 0&0 \\
   0 & a_{11}& 0&0 \\
  a_{31}&a_{32}& a_{11}& a_{12}\\
  a_{41}&a_{42}& 0& a_{11}
\end{array}%
\right)$&$6$&
$As_{4}^{8}$& $\left(%
\begin{array}{cccc}
  a_{11} & 0 & 0&0 \\
   0 & a_{11}& 0&0 \\
  0&0&a_{11}&0 \\
  a_{41}&a_{42}& a_{43}&a_{11}
\end{array}%
\right)$&$4$\\
\hline
$As_{4}^{9}(\alpha )$& $\left(%
\begin{array}{cccc}
  a_{11} & 0 & 0&0 \\
  0 & a_{11}& 0&0 \\
  a_{31}&a_{32}&a_{11}&0 \\
  a_{41}&a_{42}&0&a_{11}
\end{array}%
\right)$&$5$&
$As_{4}^{10}$& $\left(%
\begin{array}{cccc}
 a_{11} & 0 & 0&0 \\
  a_{21} &  a_{11}& 0&0 \\
  a_{31}&0& a_{11}&0 \\
  a_{41}&0&0& a_{11}
\end{array}%
\right)$&$4$\\
\hline
$As_{4}^{11}$& $\left(%
\begin{array}{cccc}
  a_{11} & 0 & 0&0 \\
  0 & a_{11}& 0&0 \\
  0& 0&a_{11}&0 \\
  0&0&0&a_{11}
\end{array}%
\right)$&$1$&
$As_{4}^{12}$& $\left(%
\begin{array}{cccc}
  a_{11} & 0 & 0&0 \\
  0 & a_{11}& 0&0 \\
  0& 0&a_{11}&0 \\
  0&0&0&a_{11}
\end{array}%
\right)$&$1$\\
\hline
$As_{4}^{13}$& $\left(%
\begin{array}{cccc}
  a_{11} & 0 & 0&0 \\
  0 & a_{11}& 0&0 \\
  0& 0&a_{33}&0 \\
  0&0&0&a_{33}
\end{array}%
\right)$&$2$&
$As_{4}^{14}$& $\left(%
\begin{array}{cccc}
   a_{11} & 0 & 0&0 \\
  0 & a_{11}& 0&0 \\
  0& 0&a_{11}&0 \\
  0&0&0&a_{11}
\end{array}%
\right)$&$1$\\
\hline

$As_{4}^{15}$& $\left(%
\begin{array}{cccc}
   a_{11} & 0 & 0&0 \\
  0 & a_{11}& 0&0 \\
  0& 0&a_{11}&0 \\
  0&0&0&a_{11}
\end{array}%
\right)$&$1$&
$As_{4}^{16}$& $\left(%
\begin{array}{cccc}
   a_{11} & 0 & 0&0 \\
  0 & a_{11}& 0&0 \\
  0& 0&a_{11}&0 \\
  0&0&0&a_{11}
\end{array}%
\right)$ &$1$\\
\hline
$As_{4}^{17}$& $\left(%
\begin{array}{cccc}
  a_{11} & 0 & 0&0 \\
  0 & a_{11}& 0&0 \\
  0& 0&a_{11}&0 \\
  0&0&0&a_{11}
\end{array}%
\right)$&$1$&
$As_{4}^{18}$& $\left(%
\begin{array}{cccc}
  a_{11} & 0 & 0&0 \\
  0 & a_{11}& 0&0 \\
  0& 0&a_{33}&0 \\
  0&0&0&a_{33}
\end{array}%
\right)$&$2$\\
\hline
$As_{4}^{19}$& $\left(%
\begin{array}{cccc}
    a_{11} & 0 & 0&0 \\
  0 & a_{22}& 0&0 \\
  0& 0&a_{11}&0 \\
  0&0&0&a_{22}
\end{array}%
\right)$&$2$&
$As_{4}^{20}$& $\left(%
\begin{array}{cccc}
  a_{11} & 0 & 0&0 \\
  0 & a_{22}& 0&0 \\
  0& 0&a_{33}&0 \\
  0&0&0&a_{44}
\end{array}%
\right)$&$4$\\
\hline

$As_{4}^{21}$
& $\left(%
\begin{array}{cccc}
  a_{11}& 0 & 0&0 \\
  0 &a_{11}& 0&0 \\
 a_{31}& 0&a_{11}&0 \\
  a_{41}&a_{42}&a_{31}&a_{11}
\end{array}%
\right)$&$4$&
$As_{4}^{22}$& $\left(%
\begin{array}{cccc}
   a_{11}& 0 & 0&0 \\
  0 &a_{11}& 0&0 \\
 a_{31}& a_{32}&a_{11}&0 \\
  a_{41}&a_{42}&0&a_{11}
\end{array}%
\right)$&$5$\\
\hline
$As_{4}^{23}(\mu)$& $\left(%
\begin{array}{cccc}
 a_{11}& 0 & 0&0 \\
  0 &a_{11}& 0&0 \\
 a_{31}& a_{32}&a_{11}&0 \\
  a_{41}&a_{42}&0&a_{11}
\end{array}%
\right)$&$5$&
$As_{4}^{24}$& $\left(%
\begin{array}{cccc}
  a_{11}& 0 & 0&0 \\
  0 &a_{11}& 0&0 \\
 0& 0&a_{11}&0 \\
  a_{41}&a_{42}&a_{43}&a_{11}
\end{array}%
\right)$&$4$\\
\hline
\end{tabular}
\end{center}
\begin{flushright}
Continued to the next page
\end{flushright}
\end{table}
\clearpage
\begin{table}[ht!]
\begin{center}
\small
\begin{tabular}{|c|c|c||c|c|c|c|c|} \hline
\textbf{IC}&\textbf{Centroid }&\textbf{Dim}&\textbf{IC}&\textbf{Centroid }&\textbf{Dim}\\
& \textbf{$\Gamma(A)$} & && $\Gamma(A)$&  \\
\hline

$As_{4}^{25}$ & $\left(%
\begin{array}{cccc}
  a_{11}& 0 & 0&0 \\
  a_{21} &a_{22}& 0&a_{24} \\
 a_{31}& a_{32}&a_{11}&0 \\
  a_{41}&0&0&a_{11}
\end{array}%
\right)$&$7$&
$As_{4}^{26}$& $\left(%
\begin{array}{cccc}
 a_{11}& 0 & 0&0 \\
  a_{21} &a_{11}& 0&0 \\
 a_{31}& 0&a_{11}&0 \\
  a_{41}&0&0&a_{11}
\end{array}%
\right)$&$4$\\
\hline
$As_{4}^{27}$& $\left(%
\begin{array}{cccc}
 a_{11}& 0 & 0&0 \\
  0 &a_{11}& 0&0 \\
 0& 0&a_{11}&0 \\
  a_{41}&0&0&a_{11}
\end{array}%
\right)$&$2$&
$As_{4}^{28}$ & $\left(%
\begin{array}{cccc}
 a_{11}& 0 & 0&0 \\
  a_{21} &a_{11}& 0&0 \\
 0& 0&a_{11}&0 \\
  0&0&0&a_{11}
\end{array}%
\right)$&$2$\\
\hline
$As_{4}^{29}$ & $\left(%
\begin{array}{cccc}
a_{11}& 0 & 0&0 \\
0 &a_{11}& 0&0 \\
0& 0&a_{11}&0 \\
0&0&0&a_{11}
\end{array}%
\right)$&$1$&
$ As_{4}^{30}$ & $\left(%
\begin{array}{cccc}
  a_{11}& 0 & 0&0 \\
0 &a_{22}& 0&0 \\
0& 0&a_{11}&0 \\
0&0&0&a_{11}
\end{array}%
\right)$&$2$\\
\hline
$As_{4}^{31}$ & $\left(%
\begin{array}{cccc}
 a_{11}& 0 & 0&0 \\
0 &a_{11}& 0&0 \\
0& 0&a_{11}&0 \\
0&0&0&a_{11}
\end{array}%
\right)$&$1$&
$As_{4}^{32}$& $\left(%
\begin{array}{cccc}
  a_{11}& 0 & 0&0 \\
0 &a_{11}& 0&0 \\
0& 0&a_{11}&0 \\
0&0&0&a_{11}
\end{array}%
\right)$&$1$\\
\hline
$As_{4}^{33}$& $\left(%
\begin{array}{cccc}
  a_{11}& 0 & 0&0 \\
0 &a_{22}& 0&0 \\
0& 0&a_{22}&0 \\
0&0&0&a_{22}
\end{array}%
\right)$&$2$&
$As_{4}^{34}$& $\left(%
\begin{array}{cccc}
 a_{11}& 0 & 0&0 \\
0 &a_{22}& 0&0 \\
0& 0&a_{33}&0 \\
0&0&0&a_{33}
\end{array}%
\right)$&$1$\\
\hline
$As_{4}^{35}$& $\left(%
\begin{array}{cccc}
 a_{11}& 0 & 0&0 \\
0 &a_{11}& 0&0 \\
0& 0&a_{11}&0 \\
a_{41}&a_{42}&a_{43}&a_{11}
\end{array}%
\right)$&$4$&
$As_{4}^{36}$& $\left(%
\begin{array}{cccc}
a_{11}& 0 & 0&0 \\
0 &a_{11}& 0&0 \\
a_{31}& a_{32}&a_{11}&-a_{41} \\
a_{41}&a_{42}&a_{43}&a_{11}\\
\end{array}%
\right)$&$4$\\
\hline
$As_{4}^{37}$ & $\left(%
\begin{array}{cccc}
  a_{11}& 0 & 0&0 \\
0 &a_{11}& 0&0 \\
0& 0&a_{11}&0 \\
0&0&0&a_{11}
\end{array}%
\right)$&$1$&
$As_{4}^{38}$ & $\left(%
\begin{array}{cccc}
  a_{11}& 0 & 0&0 \\
a_{21} &a_{11}& 0&0 \\
a_{31}& 0&a_{11}&0 \\
0&0&0&a_{11}
\end{array}%
\right)$&$3$\\
\hline
$As_{4}^{39}$ & $\left(%
\begin{array}{cccc}
a_{11}& 0 & 0&0 \\
0 &a_{11}& 0&0 \\
a_{31}& 0&a_{11}&0 \\
0&a_{31}&0&a_{11}
\end{array}%
\right)$&$2$&
$As_{4}^{40}$ & $\left(%
\begin{array}{cccc}
  a_{11}& 0 & 0&0 \\
a_{21} &a_{11}& 0&a_{24} \\
a_{31}& 0&a_{11}&a_{34} \\
0&a_{31}&0&a_{11}
\end{array}%
\right)$&$5$\\
\hline
$As_{4}^{41}$ & $\left(%
\begin{array}{cccc}
  a_{11}& 0 & 0&0 \\
a_{21} &a_{11}& 0&a_{24} \\
0& 0&a_{33}&0 \\
0&0&a_{43}&a_{33}
\end{array}%
\right)$&$4$&
$As_{4}^{42}$  & $\left(%
\begin{array}{cccc}
a_{11}& 0 & 0&0 \\
0 &a_{11}& 0&0 \\
a_{31}& 0&a_{11}&0 \\
0&0&0&a_{11}
\end{array}%
\right)$&$2$\\
\hline

$As_{4}^{43}$ & $\left(%
\begin{array}{cccc}
  a_{11}& 0 & 0&0 \\
0 &a_{11}& 0&0 \\
a_{31}& 0&a_{11}&0 \\
0&a_{31}&0&a_{11}
\end{array}%
\right)$&$2$&
$As_{4}^{44}$& $\left(%
\begin{array}{cccc}
a_{11}& 0 & 0&0 \\
0 &a_{11}& 0&0 \\
a_{31}& 0&a_{11}&0 \\
0&0&0&a_{11}
\end{array}%
\right)$&$2$\\
\hline
\end{tabular}
\end{center}
\begin{flushright}
Continued to the next page
\end{flushright}
\end{table}
\clearpage
\begin{table}[ht!]
\begin{center}
\small
\begin{tabular}{|c|c|c||c|c|c|c|c|} \hline
\textbf{IC}&\textbf{Centroid }&\textbf{Dim}&\textbf{IC}&\textbf{Centroid }&\textbf{Dim}\\
& \textbf{$\Gamma(A)$} & && $\Gamma(A)$&  \\
\hline
$As_{4}^{45}$ & $\left(%
\begin{array}{cccc}
  a_{11}& 0 & 0&0 \\
0 &a_{11}& 0&0 \\
0& 0&a_{11}&0 \\
0&0&0&a_{11}
\end{array}%
\right)$&$1$&
$As_{4}^{46}$ & $\left(%
\begin{array}{cccc}
 a_{11}& 0 & 0&0 \\
0 &a_{11}& 0&0 \\
0& 0&a_{11}&0 \\
0&0&0&a_{11}
\end{array}%
\right)$&$1$\\
\hline
$ As_{4}^{47}$ & $\left(%
\begin{array}{cccc}
 a_{11}& 0 & 0&0 \\
0 &a_{22}& 0&0 \\
0& a_{32}&a_{22}&0 \\
0&a_{42}&0&a_{22}
\end{array}%
\right)$&$4$&
$As_{4}^{48}$ & $\left(%
\begin{array}{cccc}
 a_{11}& 0 & 0&0 \\
a_{21} &a_{11}& 0&0 \\
a_{31}&a_{21}&a_{11}&0 \\
0&a_{31}&a_{21}&a_{11}\\
\end{array}%
\right)$&$3$\\
\hline

$As_{4}^{49}$ & $\left(%
\begin{array}{cccc}
  a_{11}& 0& 0&0\\
  0&a_{11}&a_{23}&0 \\
 0& 0&a_{11}&0\\
  0&0&0&a_{11}
\end{array}%
\right)$&$3$&
$As_{4}^{50}$ & $\left(%
\begin{array}{cccc}
  a_{11}& 0& 0&0\\
  a_{21}&a_{11}&0&0 \\
a_{31}& 0&a_{11}&0\\
  a_{41}&0&0&a_{11}
\end{array}%
\right)$&$4$\\
\hline
$As_{4}^{51}$ & $\left(%
\begin{array}{cccc}
 a_{11}& 0& 0&0\\
  a_{21}&a_{11}&0&0 \\
 0& 0&a_{11}&0\\
  a_{41}&a_{21}&0&a_{11}
\end{array}%
\right)$&$3$&
$As_{4}^{52}$  & $\left(%
\begin{array}{cccc}
  a_{11}& 0& 0&0\\
  a_{21}&a_{11}&0&0 \\
 0& 0&a_{11}&0\\
  a_{41}&a_{21}&0&a_{11}
\end{array}%
\right)$&$3$\\
\hline
$As_{4}^{53}$  & $\left(%
\begin{array}{cccc}
 a_{11}& 0& 0&0\\
  0&a_{22}&0&0 \\
 0& a_{32}&a_{22}&0\\
  0&a_{42}&a_{32}&a_{22}
\end{array}%
\right)$&$4$&
$As_{4}^{54}$&$\left(%
\begin{array}{cccc}
  a_{11}& 0 & 0&0 \\
0 &a_{11}& 0&0 \\
0& 0&a_{11}&0 \\
0&0&0&a_{11}
\end{array}%
\right)$&$1$\\
\hline
$As_{4}^{55}$ & $\left(%
\begin{array}{cccc}
 a_{11}& 0 & 0&0 \\
 a_{21} &a_{11}& 0&0 \\
 a_{31}& 0&a_{11}&0 \\
 a_{41}& a_{21}&0&a_{11}
\end{array}%
\right)$&$4$&
$As_{4}^{56}$ & $\left(%
\begin{array}{cccc}
   a_{11}& 0 & 0&0 \\
 a_{21} &a_{11}& 0&0 \\
 a_{31}& 0&a_{11}&0 \\
 a_{41}& -a_{31}&-a_{21}&a_{11}
\end{array}%
\right)$&$4$\\
\hline
$As_{4}^{57}$& $\left(%
\begin{array}{cccc}
   a_{11}& 0 & 0&0 \\
0 &a_{11}& 0&0 \\
 0& 0&a_{11}&0 \\
 a_{41}& 0&0&a_{11}
\end{array}%
\right)$&$2$&
$As_{4}^{58}$& $\left(%
\begin{array}{cccc}
   a_{11}& 0 & 0&0 \\
 0 &a_{11}& 0&0 \\
 0& 0&a_{11}&0 \\
 a_{41}& 0&0&a_{11}
\end{array}%
\right)$&$2$\\
\hline
\end{tabular}
\end{center}
\end{table}

\begin{proof}
Let us consider $As_{4}^{1}$, the structure constants are given as follows $\gamma_{11}^{3}=1, \gamma_{22}^{4}=1$ others being zero. From the systems of equations (\ref{qaaq}) and (\ref{qaaaq}), we have\\
$a_{12}=a_{13} =a_{14}=a_{21}=a_{23}=a_{24}=a_{31}= a_{32}=a_{34}=a_{41}=a_{42}=a_{43}=0$ and $a_{33}=a_{11}=a_{44}.$
Therefore, we obtain the centroids of $As_{4}^{1}$ in matrix form as follows; $$\Gamma(As_{4}^{1})= \left(%
\begin{array}{cccc}
  a_{11} & 0 & 0&0 \\
  0 & a_{22}& 0&0 \\
  0&0&a_{11}&0 \\
  0&0&0&a_{11}
\end{array}%
\right).$$
Now let us consider $As_{4}^{2}$. The structure constants are given as follows $\gamma_{12}^{3}=1, \gamma_{21}^{4}=1$ . Using the system of equations (\ref{qaaq}) and (\ref{qaaaq}), we get
$a_{12}=a_{13} =a_{14}=a_{21}=a_{23}=a_{24}= a_{34}=a_{43}=0$ and $a_{33}=a_{11}=a_{22}=a_{44}.$\\\\
Therefore, $$\Gamma(As_{4}^{2})= \left(%
\begin{array}{cccc}
   a_{11} & 0 & 0&0 \\
   0 & a_{11}& 0&0 \\
  a_{31}&a_{32}&a_{11}&0 \\
   a_{41}&a_{42}& 0&a_{11}
\end{array}%
\right).$$
Let us take the class  $As_{4}^{3}$ with structure constants  $\gamma_{11}^{4}=1, \gamma_{31}^{4}=1$ and others are zero. Applying (\ref{qaaq}) and (\ref{qaaaq}), we get\\
$a_{12}=a_{13} =a_{14}=a_{21}=a_{23}=a_{24}= a_{31}= a_{32}= a_{34}=0$ and $a_{33}=a_{11}=a_{22}=a_{44}.$\\\\
Therefore, $$\Gamma(As_{4}^{3})= \left(%
\begin{array}{cccc}
  a_{11} & 0 & 0&0 \\
 0 & a_{11} & 0&0 \\
  0&0&a_{11} &0 \\
  a_{41}&a_{42}&a_{43}&a_{11}
\end{array}%
\right).$$
Using the same approach, the centroids of associative algebras in dimension four can be computed for the remaining algebras.
\end{proof}

\section{Acknowledgement}

Part of this study was presented at the regional fundamental science congress, 2014 (RFSC2014) organised by Universiti Putra Malaysia. The authors would like to thank Assoc. Prof., Dr. M. R. K. Ariffin for his financial support (which facilitated first author participation in RFSC2014) through the Putra Grant with vote number 9419300.

\end{document}